\theoremstyle:=definition,remark,plain\do{%
        \expandafter\g@addto@macro\csname th@\theoremstyle\endcsname{%
            \addtolength\thm@preskip\parskip
            }%
        }
\newtheorem{theorem}{Theorem}
\newtheorem*{theorem*}{Theorem}
\newtheorem{lemma}[theorem]{Lemma}
\newtheorem{prop}[theorem]{Proposition}
\newtheorem*{prop*}{Proposition}
\newtheorem{corollary}[theorem]{Corollary}
\newtheorem*{lemma*}{Lemma}
\theoremstyle{definition}
\newtheorem*{definition*}{Definition}
\newtheorem*{remark*}{Remark}
\newcommand{\cal}{\mathcal}
\newcommand{\defi}[1]{{{\em{#1}}}}
\DeclareMathOperator{\conv}{conv}
\newcommand{\subp}[1]{\subsubsection*{\bf #1}}
\begin{document} 

\title{The fractional Helly number for separable convexity spaces}  

\author{Andreas F. Holmsen \and Zuzana Pat\'akov\'a }

\date{\today}

\address{Andreas F. Holmsen, 
 Department of Mathematical Sciences, 
 KAIST, 
 Daejeon, South Korea.  \hfill \hfill  \linebreak \and \linebreak
Discrete Mathematics Group,  Institute for Basic Sciences (IBS), Daejeon, South Korea. }
\email{andreash@kaist.edu}

 \address{Zuzana Pat\'akov\'a, Department of Algebra, Faculty of Mathematics and Physics, Charles University, Prague, Czech Republic}
\email{patakova@karlin.mff.cuni.cz}

\begin{abstract} 
A \emph{convex lattice set} in $\mathbb{Z}^d$ is the intersection of a convex set in $\mathbb{R}^d$ with the integer lattice $\mathbb{Z}^d$. A classical theorem of Doignon states that the \emph{Helly number} of $d$-dimensional convex lattice sets equals $2^d$, exponentially larger than the Helly number $d+1$ of ordinary convex sets in $\mathbb{R}^d$. By contrast, a remarkable theorem of B{\'a}r{\'a}ny and Matou{\v s}ek states that the \emph{fractional Helly number} of convex lattice sets drops back down to $d+1$, matching the classical fractional Helly theorem of Katchalski and Liu.
In this paper we generalize the B{\'a}r{\'a}ny--Matou{\v s}ek theorem to abstract convexity spaces (in the sense of van de Vel) that satisfy a suitable separation axiom. Our main result implies the following: if a separable convexity space has Radon number at most $r$, then its fractional Helly number is at most $2^{r}$. This bound is nearly tight, as illustrated by the case of \emph{box convexity} in $\mathbb{R}^d$, whose Radon number is $\Theta(\log d)$ and fractional Helly number equals $d+1$.
\end{abstract}

\maketitle 
\section{Introduction}
\subsection{Background}

\subp{The fractional Helly theorem} 
We say that a family of sets is \emph{intersecting} if the intersection of all its members is nonempty. 
The well-known theorem of Helly \cite{helly} asserts that if $F$ is a finite family of at least $d+1$ convex sets in $\mathbb{R}^d$ in which every $(d+1)$-tuple is intersecting, then the entire family $F$ is intersecting. 

Among the numerous generalizations and variations of Helly's theorem, a particularly important one is the \emph{fractional Helly theorem}, due to Katchalski and Liu \cite{katch-liu}. Roughly speaking, it states that if the family contains \emph{many} intersecting $(d+1)$-tuples, then the family contains a \emph{large} intersecting subfamily.   
More precisely, the theorem asserts that if $F$ is a finite family of convex sets in $\mathbb{R}^d$ with at least $\alpha\cdot \binom{|F|}{d+1}$ intersecting $(d+1)$-tuples, then $F$ contains an intersecting subfamily of size at least $\beta \cdot |F|$, where $\beta$ is a positive constant depending only on $d$ and $\alpha$. In fact, the works of Eckhoff \cite{eckhoff} and Kalai \cite{kalai} imply that one may take $\beta = 1-(1-\alpha)^{1/(d+1)}$, which is known to be  optimal.

The significance of the fractional Helly theorem in modern combinatorial convexity was demonstrated by Alon and Kleitman in their celebrated proof of the $(p,q)$-theorem \cite{akpq}, and the theorem has been further studied in, e.g., \cite{akmm}. It turns out that in these applications, what matters most is the qualitative conclusion -- that a positive fraction of $F$ forms an intersecting subfamily -- rather than the precise value of $\beta$ in terms of $d$ and $\alpha$. 
See also \cite[Chapters 8--10]{mato} and \cite{bara-book} for further discussion and references. 

\subp{The fractional Helly property} Our goal in this paper is to investigate the fractional Helly theorem for set systems that are more general than the set system of all convex subsets of $\mathbb{R}^d$. In such a setting, it might not be clear what should play the role of the dimension, and we cannot hope to determine the exact dependency between $\alpha$ and $\beta$. Instead, we aim for \emph{existence results}, motivating the following definitions, first introduced in \cite{akmm}.

We say that a set system $\cal C$ satisfies the \defi{fractional Helly property} if there exist an integer $k$ and a function $\beta:(0,1) \to (0,1)$ for which the following holds: every finite family $F \subset {\cal C}$ that contains at least $\alpha\cdot\binom{|F|}{k}$ intersecting $k$-tuples, also contains an intersecting subfamily of size at least $\beta(\alpha)\cdot |F|$. In this situation, we also say that $\cal C$ satisfies the \defi{fractional Helly property for $k$-tuples}.  The smallest integer $k$ such that $\cal C$ satisfies the fractional Helly property for $k$-tuples (if it exists) is called the \defi{fractional Helly number} for $\cal C$.

Note that the fractional Helly property is trivially satisfied if $\cal C$ is finite, and so it is really only interesting to consider the fractional Helly property for infinite set systems. A natural example of such an infinite set system is the collection of all convex sets in $\mathbb{R}^d$, in which case the fractional Helly number equals $d+1$.

\subp{Convex lattice sets} 
Let us consider another, rather remarkable  example. 
A subset $S\subset \mathbb{Z}^d$ is a \emph{convex lattice set} if there exists a convex set 
$C\subset \mathbb{R}^d$ such that $S = C \cap \mathbb{Z}^d$. 
There is a Helly theorem for finite families of convex lattice sets, 
due to Doignon \cite{JPD}, asserting that if every $2^d$ members have a point in common, 
then the family is intersecting. 
The theorem is sharp, as $2^d$ cannot be replaced by any smaller value.

Regarding the fractional Helly property, it was observed in \cite[Section 8]{akmm} that the method of Katchalski and Liu (a collapsibility argument) can be used to show that the set system of all convex lattice sets in $\mathbb{Z}^d$ satisfies the fractional Helly property for $2^d$-tuples. However, the following result, due to B{\'a}r{\'a}ny and Matou{\v s}ek \cite{lattice}, shows that the fractional Helly number is considerably smaller.

\begin{theorem*}[B{\'a}r{\'a}ny--Matou{\v s}ek] \label{BM-lattice}
The fractional Helly number for the set system of all convex lattice sets in $\mathbb{Z}^d$ equals $d+1$.
\end{theorem*}

B{\'a}r{\'a}ny and Matou{\v s}ek's result also implies a $(p,q)$-theorem for convex lattice sets in $\mathbb{Z}^d$ for all integers $p\geq q \geq d+1$ \cite[Theorem 1.2]{lattice}. Therefore, although lattice convexity and standard convexity differ greatly in terms of their Helly numbers, for more ``global'' Helly-type phenomena, the key quantity remains $d+1$. 

Their proof of this result relies primarily on tools from extremal combinatorics, and in \cite[page 234]{lattice}, B{\'a}r{\'a}ny and Matou{\v s}ek make the following remark:

\begin{quote}
\emph{``Our proof of the fractional Helly theorem for convex lattice sets does not use much of the geometric properties of $\mathbb{Z}^d$ ... It would be interesting to clarify what axioms are sufficient ... say in the context of abstract convexity spaces."}
\end{quote}

The purpose of this paper is to address the remark of B{\'a}r{\'a}ny and Matou{\v s}ek, and thereby extend their theorem to the setting of abstract convexity spaces. 

\subsection{The main result}
To state our main result, we need to introduce several notions and parameters.  
\subp{Convexity spaces}
A convexity space $(X, \mathcal{C})$ consists of a non-empty set $X$ and a collection of subsets $\mathcal{C}\subset 2^X$ satisfying the following properties.
\begin{itemize}
\item[(C1)] $\emptyset$ and  $X$ are in $\mathcal{C}$.
    \item[(C2)] $\mathcal{C}$ is closed under intersections, that is, if $\mathcal{D}\subset \mathcal{C}$ is non-empty, then  $\bigcap_{C\in \mathcal{D}}C$ is in $\mathcal{C}$.
    \item[(C3)] $\mathcal{C}$ is closed under nested unions, that is, if $\mathcal{D}\subset \mathcal{C}$ is non-empty and totally ordered by inclusion, then $\bigcup_{C\in \mathcal{D}}C$ is in $\mathcal{C}$.
\end{itemize}
In a convexity space $(X, \mathcal{C})$ the members of $\mathcal{C}$ are called \emph{convex sets}. Given a subset $Y\subset X$ we define the \emph{convex hull} of $Y$, denoted by $\conv(Y)$, as the intersection of all convex sets that contain $Y$. For a comprehensive treatment of the theory of convexity spaces we refer the reader to \cite{vandevel}.

\subsubsection*{Remark}
Note that a set system that satisfies only properties (C1) and (C2) is often called a \emph{closure system}, and these properties alone suffice to define a closure (hull) operator. Property (C3) is equivalent to requiring that the closure operator is \emph{domain finite}, meaning that for any $A\subset X$ and $p \in \conv(A)$ there exists a finite subset $B\subset A$ such that $p\in \conv(B)$. (See \cite[Section 1.2]{vandevel}.)

The fractional Helly property is defined for \emph{finite} families of convex sets (in a typically infinite convexity space), and condition (C3) is in fact not essential for it. Consequently, our results extend in a straightforward way to general closure systems. However, to maintain a clear connection to the geometric origins, we choose to formulate and discuss our results in the setting of convexity spaces.

\subp{Radon and Helly numbers}
The complexity of a convexity space can be measured by various parameters. One of the crucial ones for us is the \emph{Radon number}, which is defined to be the smallest integer $n$ (if it exists) such that every subset $Y\subset X$ with $|Y|=n$ admits a partition $Y = A\cup B$ such that $\conv(A)\cap \conv(B)\neq \emptyset$. 

A closely related parameter is the \emph{Helly number}, the smallest integer $n$ (if it exists) such that every finite family $F\subset \mathcal{C}$ with $\bigcap_{S\in F}S = \emptyset$ contains a subfamily $G\subset F$ with $|G|\leq n$ and $\bigcap_{S\in G}S = \emptyset$. A fundamental theorem of Levi \cite{levi} asserts that for any convexity space with bounded Radon number, the Helly number exists and is strictly less than the Radon number.


\subp{Separability} Given a convexity space $(X, \mathcal{C})$, 
a \emph{halfspace} is a convex set $\gamma\in \mathcal{C}$ such that its complement is also convex, 
that is,  $\bar{\gamma} = X\setminus \gamma \in \mathcal{C}$. 
Note that $\emptyset$ and $X$ are both halfspaces. 
The convexity space $(X, \mathcal{C})$ is called \emph{separable} if for any 
$S\in \mathcal{C}$ and $x\in X\setminus S$ there exists a halfspace $\gamma\in \mathcal{C}$ 
such that $S\subset \gamma$ and $x\in \bar{\gamma}$. 
(In general, there exists a hierarchy of ``separation axioms'', introduced by Jamison in 1974 \cite{jamison}, 
and what we define here as separable is referred to as $S\hspace{-.2ex}_3$-separability in \cite[Chapter I.3]{vandevel}. See also \cite[Section~1]{chepoi24} for an in-depth historical overview of separability in convexity spaces.)

A useful fact is that a convexity space is separable if and only if the convex hull of any finite set can be expressed as an intersection of halfspaces. (See e.g. \cite[Proposition 1.7.3 and Corollary 3.9]{vandevel}.)

Standard convexity in $\mathbb{R}^d$ is a prototypical separable convexity space, 
and the collection of convex lattice sets in $\mathbb{Z}^d$ provides another example. 
By contrast, the convexity space formed by convex sets in $\mathbb{R}^d$ of diameter at most one is not separable: 
its only halfspaces are $\emptyset$ and $\mathbb{R}^d$. 
Our goal is to show that for any separable convexity space with bounded Radon number, 
the fractional Helly number can be bounded in terms of a natural complexity measure of its halfspace collection.

\subp{Dual VC-dimension} 
Let $\mathcal{F}$ be a set system on a ground set $X$. We denote by $A$ the incidence matrix of $\mathcal{F}$ on $X$, where the rows are indexed by elements of $\mathcal{F}$ and the columns by elements of $X$. The \emph{dual shatter function} of $\mathcal{F}$ is a function $\pi_\mathcal{F}^* : \mathbb{N} \to \mathbb{N}$, where $\pi_{\cal F}^*(m)$ is the maximum number of distinct columns in an $m$-row submatrix $A'$ of $A$ (the maximum is taken over all $m$-row submatrices). Note that $\pi_{\cal F}^*(m)$ counts the maximum number of nonempty regions of the Venn diagram of $m$ sets from $\cal F$. 

The \emph{dual VC-dimension} of $\cal F$ is the largest $k$ such that some $k$ members of $\mathcal{F}$ have a complete Venn diagram, that is, $\max\{ k : \pi_\mathcal{F}^*(k) = 2^k \}$.

\medskip
\noindent We are now in a position to state the main result of this paper:

\begin{theorem}\label{main}
Let $(X, {\cal C})$ be a separable convexity space with bounded Radon number,  and suppose the system of halfspaces has dual VC-dimension $d$. Then the fractional Helly number for $\cal C$ is at most $d+1$.  
\end{theorem}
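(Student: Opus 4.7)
The strategy is to apply Matou\v{s}ek's theorem (Theorem~\ref{thm:mato}) to a finite family of halfspaces induced by $F$, and then transfer the conclusion back to $F$ using the separation property and the finite Helly number $h$ of $\mathcal{C}$ guaranteed by Levi's theorem (Theorem~\ref{levis theorem}).

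Let $F = \{C_1,\dots,C_n\} \subset \mathcal{C}$ have at least $\alpha\binom{n}{d+1}$ intersecting $(d+1)$-tuples. For every intersecting $(d+1)$-tuple $I$, pick a witness point $p_I \in \bigcap_{i \in I} C_i$, and let $P$ denote the finite set of all such witnesses. Using separation, for each pair $(i,p) \in [n] \times P$ with $p \notin C_i$, choose a halfspace $\gamma_{i,p} \in \mathcal{H}$ with $C_i \subset \gamma_{i,p}$ and $p \notin \gamma_{i,p}$, and gather these into a finite family $\mathcal{G} \subset \mathcal{H}$. Setting $\mathcal{G}_i = \{\gamma_{i,p} \in \mathcal{G} : p \in P \setminus C_i\}$, the construction guarantees the identity $C_i \cap P = P \cap \bigcap \mathcal{G}_i$ for every $i$, so the incidences between $P$ and $F$ are faithfully recorded by $\mathcal{G}$. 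Since $\mathcal{G} \subset \mathcal{H}$ inherits dual VC-dimension at most $d$, both Theorem~\ref{thm:mato} and the dual Sauer--Shelah bound apply to $\mathcal{G}$.

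The core argument should then be a double counting of the intersecting $(d+1)$-tuples of $F$: either some point of $P$ already lies in $\beta(\alpha)n$ many $C_i$'s, giving the desired intersecting subfamily directly; or $\mathcal{G}$ inherits enough intersecting $(d+1)$-tuples to feed into Matou\v{s}ek's theorem, producing a large intersecting halfspace subfamily whose common point, together with the finiteness of $h$, certifies an intersecting subfamily of $F$ of linear size.

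\textbf{Main obstacle.} The principal difficulty lies in the final transfer from intersecting halfspaces back to intersecting convex sets. A naive reduction that picks a single halfspace $\gamma \supset C_i$ per set $C_i$ fails: the halfspace can be strictly larger than $C_i$, so a common point of such halfspaces need not belong to the underlying $C_i$'s. The larger family $\mathcal{G}$ indexed by witness points circumvents this, but one must balance the size $|\mathcal{G}|$ (bounded by $n|P|$) against the density of intersecting $(d+1)$-tuples inherited by $\mathcal{G}$, so that both Theorem~\ref{thm:mato} and the dual Sauer--Shelah bound yield nontrivial conclusions. The finite Helly number $h$ of $\mathcal{C}$, provided by Levi's theorem, is what should make this balancing feasible by limiting the amount of halfspace information required to certify an intersecting subfamily of $F$.
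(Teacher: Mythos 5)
Your proposal and the paper's proof are fundamentally different, and the proposal as sketched has a genuine gap in the final transfer step.

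The identity $C_i \cap P = P \cap \bigcap \mathcal{G}_i$ is correct, but it only controls intersection patterns \emph{restricted to the finite witness set $P$}. When you apply Theorem~\ref{thm:mato} to $\mathcal{G}$ you obtain a large intersecting subfamily $\mathcal{G}' \subset \mathcal{G}$ with a common point $x$, but (i) there is no reason for $x$ to lie in $P$, so the identity above tells you nothing about whether $x \in C_i$; (ii) each halfspace $\gamma_{i,p}$ properly contains $C_i$, so even if $x$ belongs to every member of $\mathcal{G}_i$ it need not belong to $C_i$; and (iii) the intersecting subfamily $\mathcal{G}'$, of size roughly $\beta'\,n|P|$, could concentrate in a constant number of the groups $\mathcal{G}_i$, so it need not certify a linear-size collection of indices $i$ at all. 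The invocation of Levi's theorem is the place this is supposed to be repaired, but you give no mechanism for how a bounded Helly number for $\mathcal{C}$ would convert a common point of halfspaces into a common point of convex sets, and I do not see one: the Helly number bounds witnesses for empty intersections inside $\mathcal{C}$, which is not the same as pulling a point from $\bigcap\mathcal{G}_i$ back into $C_i$.

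The paper sidesteps this transfer problem entirely. It never applies Matou\v{s}ek's theorem to a derived family of halfspaces. Instead it proves a ``weak colorful Helly'' statement (Proposition~\ref{weak-colorful}): given $d+1$ families of size $p$ with all rainbow $(d+1)$-tuples intersecting, one family contains an intersecting $m$-tuple, where $m$ is the (possibly huge) fractional Helly number from Theorem~\ref{holmsen-lee}. The dual VC-dimension $d$ enters only at the very end of that proof, to rule out $d+1$ halfspaces realizing a complete Venn diagram; the bulk of the work is a Ramsey-type hypergraph argument (Lemmas~\ref{large separated pairs} and~\ref{for any f}) using separability and the bounded Radon number. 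Then Erd\H{o}s--Simonovits supersaturation converts ``many intersecting $(d+1)$-tuples'' into ``many intersecting $m$-tuples,'' and Theorem~\ref{holmsen-lee} finishes. In particular Theorem~\ref{holmsen-lee} is indispensable in the paper's argument and absent from yours; the paper's insight is not to replace the convex sets by halfspaces, but to reduce the exponent in the fractional Helly number from $m$ down to $d+1$ by showing that witnesses to colorful intersection must include a ``monochromatic'' intersecting $m$-tuple.
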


The proof of Theorem \ref{main} will be given in the next section. For the rest of this section, we discuss some consequences of Theorem \ref{main} and how it relates to earlier results in the literature. 

\subsection{Consequences and relations to earlier work}

\subp{Matou{\v s}ek's theorem} A connection between the dual VC-dimension and the fractional Helly property was previously established by Matou{\v s}ek \cite{mato-vc}:

\begin{theorem*}[Matou{\v s}ek] \label{thm:mato}
Let $\cal F$ be a set system with dual VC-dimension $d$. Then $\cal F$ has fractional Helly number at most $d+1$.
\end{theorem*}

It is instructive to compare this to our main result (Theorem \ref{main}). While Matou{\v s}ek's theorem gives a bound on the fractional Helly number for \emph{any} set system $\cal F$ with bounded dual VC-dimension, our result, on the other hand, should be viewed as a statement about \emph{symmetric} set systems $\cal F$, satisfying $\gamma\in \cal F \Rightarrow \bar{\gamma}\in \cal F$. 

Both theorems yield the same bound on the fractional Helly number in terms of the dual VC-dimension. However, the main strength of our result is that the fractional Helly property applies not only to $\cal F$, but to the entire set system $\cal F^\cap  = \big\{  \bigcap_{A\in \cal G}A : \cal G \subset \cal F   \big\}$. For instance, the set system of convex sets in $\mathbb{R}^d$ does not have bounded (dual) VC-dimension, and so Matou{\v s}ek's theorem does not imply the fractional Helly property for convex sets in $\mathbb{R}^d$. On the other hand, the set system of convex sets in $\mathbb R^d$ is separable \cite{hemispaces} and it is straightforward to check that the dual VC-dimension of the system of halfspaces in $\mathbb{R}^d$ equals $d$ (see e.g., \cite[Section 5.1]{mato-gd}). Theorem \ref{main} implies that convex sets in $\mathbb{R}^d$ have fractional Helly number (at most) $d+1$.

\subp{The B{\'a}r{\'a}ny--Matou{\v s}ek theorem} It follows readily that  B{\'a}r{\'a}ny and Matou{\v s}ek's theorem on the fractional Helly number for convex lattice sets is a special case of Theorem \ref{main} as well. The set system of convex lattice sets in $\mathbb{Z}^d$ is separable, with halfspaces of the form $\mathbb{Z}^d\cap \gamma$, where $\gamma$ is a halfspace in $\mathbb R^d$. The dual VC-dimension of halfspaces in $\mathbb Z^d$ is $d$ (by the same argument as for $\mathbb R^d$). Furthermore, a result due to Onn \cite{onn} shows that the Radon number for convex lattice sets in $\mathbb{Z}^d$ is at most $d(2^d-1)+3$, and so Theorem \ref{main} applies. Note that the precise Radon number for convex lattice sets in $\mathbb{Z}^d$ is not known for $d\geq 3$; for $d = 2$ the Radon number equals 6, but for $d\geq 3$ the best known lower bound, due to Sierksma  \cite{sierksma}, is $5\cdot 2^{d-2}+1$.

\subsubsection*{Remark} One might ask whether, in a separable convexity space with bounded Radon number in which the system of halfspaces has dual VC-dimension $d$, the \emph{Helly number} is at most $d+1$. However, this is not true, as the set system of convex lattice sets in $\mathbb Z^d$ satisfies the assumptions but has Helly number $2^d$ \cite{JPD}.

\subp{Fractional Helly and Radon numbers}  Holmsen and Lee \cite{holmsen-lee} showed that for a general convexity space (not necessarily separable), it is possible to bound the fractional Helly number in terms of its Radon number:

\begin{theorem*}[Holmsen--Lee] Let $(X, \cal C)$ be a convexity space with Radon number $r\geq 3$. Then the fractional Helly number for $\cal C$ is at most $r^{r^{\log r}}$. \end{theorem*}

As we will demonstrate below, by adding the separability assumption we can use Theorem \ref{main} to reduce the bound on the fractional Helly number to exponential in $r$. 

The complexity of the system of halfspaces of a separable convexity space was previously investigated by Moran and Yehudayoff \cite{moran}, who used the  {\em VC-dimension} (primal, not the dual) to establish a weak $\varepsilon$-net theorem for convex sets in a separable convexity space with bounded Radon number. 

The (primal) {\em VC-dimension} of a set system $\cal F$ is defined as  $\max\{ k : \pi_{\cal F}(k) = 2^k \}$, where $\pi_{\cal F}(m)$ is defined as the maximum number of distinct rows in any $m$-column submatrix of the incidence matrix $A$ of $\mathcal{F}$ on $X$. (Recall that rows of $A$ are indexed by members of $\mathcal F$ and columns by elements of $X$, and compare with the definition of the dual shatter function.)

There is a simple relation between the  primal and dual VC-dimensions, first noted by Assouad \cite{assouad}. (For a proof in English, see \cite[Lemma 10.3.4]{mato}).

\begin{lemma*}[Assouad]\label{eq:vc-dim}
For any set system $\mathcal F$ we have
\[\text{\emph{dual VC-dim}}(\mathcal{F}) < 2^{\text{\emph{VC-dim}} (\mathcal{F}) + 1}.\]
\end{lemma*}

As for separable convexity spaces, Moran and Yehudayoff \cite[Lemma 1.5]{moran} observed the following:

\begin{lemma*}[Moran--Yehudayoff]\label{MYobs}
    Let $(X, \cal C)$ be a separable convexity space with Radon number $r\geq 3$. Then the system of halfspaces has VC-dimension at most $r-1$. 
\end{lemma*}

By combining the Lemmas of Assouad and Moran--Yehudayoff with Theorem \ref{main}, we obtain the following:

\begin{corollary}\label{c:frac}
 Let $r \geq 3$ and $(X, \mathcal C)$ be a separable convexity space with Radon number $r$. Then the fractional Helly number for $\mathcal C$ is at most $2^r$.
\end{corollary}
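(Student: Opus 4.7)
The statement is essentially an assembly of three ingredients that have all been laid out in the lead-up: Theorem \ref{main}, the VC-bound of Moran and Yehudayoff for halfspaces in separable convexity spaces, and the Assouad inequality (\ref{eq:vc-dim}) relating primal and dual VC-dimension. Since the argument is a short chain of invocations, there is no conceptual obstacle; the only thing to be careful about is bookkeeping of the off-by-one in the Assouad inequality so that the final bound comes out as $2^r$ rather than $2^{r+1}$ or similar.

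The plan is as follows. Let $\mathcal{H}\subset\mathcal{C}$ denote the family of halfspaces of $(X,\mathcal{C})$. First, I invoke \cite[Lemma 1.5]{moran}, already cited in the discussion preceding the corollary, which asserts that in any separable convexity space with Radon number $r$ the primal VC-dimension of $\mathcal{H}$ is at most $r-1$. Second, I plug this into the Assouad-type inequality (\ref{eq:vc-dim}), obtaining
\[
\text{dual VC-dim}(\mathcal{H}) \;<\; 2^{\,\text{VC-dim}(\mathcal{H})+1} \;\leq\; 2^{(r-1)+1} \;=\; 2^r,
\]
so that $d \defeq \text{dual VC-dim}(\mathcal{H}) \leq 2^r - 1$. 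Third, I apply Theorem \ref{main} to the convexity space $(X,\mathcal{C})$: since $(X,\mathcal{C})$ is separable and has bounded Radon number, and since the halfspaces have dual VC-dimension $d$, the fractional Helly number for $\mathcal{C}$ is at most $d+1 \leq 2^r$.

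The hypothesis $r\geq 3$ plays no active role in the derivation itself; it serves only to ensure that the convexity space is non-trivial (so that Theorem \ref{main} and the Moran--Yehudayoff lemma are being applied in the regime for which they are stated) and to guarantee that the resulting bound $2^r$ is meaningful. Since all three building blocks are already available as black boxes, no auxiliary combinatorial work is needed, and the main effort in writing the proof is simply to verify that the exponents line up correctly in the Assouad step before feeding the result into Theorem \ref{main}.
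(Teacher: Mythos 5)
Your proposal is correct and follows exactly the same route as the paper: invoke Moran--Yehudayoff to bound the primal VC-dimension of the halfspaces by $r-1$, feed this into the Assouad inequality (\ref{eq:vc-dim}) to get dual VC-dimension at most $2^r-1$, and conclude via Theorem \ref{main}. The paper presents this as a one-line remark preceding the corollary statement, and your careful tracking of the strict inequality to land on $2^r$ rather than $2^r+1$ matches the intended bookkeeping.
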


We are not aware of any non-separable convexity space with a fractional Helly number that is asymptotically greater than $2^r$.

To see that the bound in Corollary \ref{c:frac} is (asymptotically) tight, consider the example of \emph{box convexity} on $\mathbb{R}^d$. This is the convexity space whose convex sets are  axis-parallel boxes in $\mathbb{R}^d$, that is, Cartesian products of \textit{intervals} (including degenerate ones: points, rays, lines) parallel to the coordinate axes. 

This space is separable; the nontrivial halfspaces are 
of the form $\{ x \in \mathbb{R}^d: x_i\leq c \}$ and $\{ x\in \mathbb{R}^d: x_i\geq c \}$, together with  their complements. 
The Radon number of this space equals 
\[\min \left\{ r: \binom{r}{\lfloor r/2 \rfloor} > 2d \right\},\]
which grows asymptotically as $\Theta(\log_2d)$. (See \cite[Section 3]{eckhoff-box} for more details.)
The fractional Helly number for box convexity in $\mathbb{R}^d$ is obviously bounded above by $d+1$, by the standard fractional Helly theorem, and it is easily seen that this bound cannot be reduced: Take, for instance, $n$ axis-parallel hyperplanes split into $d$ groups of size $n/d$, where the members of the $i$th group are orthogonal to the $i$th coordinate axis. This is a family of $n$ boxes which contains $(n/d)^d \geq \frac{1}{e^d}\binom{n}{d}$ intersecting $d$-tuples, but where the largest intersecting subfamily has size $d$.

\subp{Necessity of bounded Radon number}
Finally, we discuss the necessity of bounding the Radon number in Theorem \ref{main} through some specific separable convexity spaces suggested by B{\'a}r{\'a}ny and Kalai  \cite[Conjecture 2.9 and Problem 2.10]{BK-bull}. 

For positive integers $k$ and $d$, let $\mathcal{B}_k^d$ denote the family of solution sets in $\mathbb{R}^d$ of systems of polynomial inequalities (strict and nonstrict) of degree at most $k$. Formally, $\mathcal{B}_k^d$ is a closure system; closing under nested unions yields a convexity space. This convexity space is separable: The halfspaces are the solution sets in $\mathbb{R}^d$ of a {\em single} polynomial inequality. Note that in the case $k=1$, the halfspaces are given by linear inequalities, which gives us the standard convexity on $\mathbb{R}^d$.

B{\'a}r{\'a}ny and Kalai \cite[Conjecture 2.9]{BK-bull} conjectured that for all integers $d$ and $k$, the space $\mathcal{B}_k^d$ has the fractional Helly property. We show that this fails already for $d=1, k=2$.

First we show that the convexity space $\mathcal{B}_2^1$ is ``universal'' in the following sense:

\begin{prop} \label{BK-counter}
Let $\mathcal{F} = \{S_1, S_2, \dots, S_m \}$ be a finite set system on a finite ground set $X$. There exists a family of sets  $\{C_1, C_2, \dots, C_m\} \subset \mathcal{B}_2^1$ with the same intersection pattern as $\mathcal{F}$: that is, 
for every $I\subset [m]$,  $\bigcap_{i\in I}S_i \neq \emptyset$ if and only if $\bigcap_{i\in I} C_i\neq \emptyset$.
\end{prop}

Assuming this proposition, unboundedness of the fractional Helly number of $\mathcal{B}_2^1$ can be shown as follows: for any given integer $t\geq 2$ there exist finite set systems of arbitrarily large size in which every $t$-tuple is intersecting, but every $(t+1)$-tuple has empty intersection -- the strongest possible failure of the fractional Helly property. In particular, this disproves the conjecture of B\'ar\'any and Kalai \cite[Conjecture 2.9]{BK-bull}. 
Moreover, the same construction shows that the Helly and Radon numbers of ${\cal B}_2^1$ are unbounded as well.

\begin{proof}[Proof of Proposition \ref{BK-counter}] Observe that for any real numbers $a_1 < a_2 < \cdots < a_{n+1}$, the system of linear and quadratic inequalities in $\mathbb{R}$ 
\[\{a_1 \leq x \leq a_{n+1}\} \cup   \big\{0 \leq  (x-a_i) (x-a_{i+1})\big\}_{i=1}^n,\]
has the points $a_1, a_2, \dots, a_{n+1} \in \mathbb{R}$ as its set of solutions, and therefore any finite subset of the real line is a member of $\mathcal{B}_2^1$. 

To complete the proof, for the given set system $\cal F   = \{S_1, S_2, \dots, S_m\}$, we index the nonempty regions of the Venn diagram $V(\cal F)$ of $\cal F$ by distinct real numbers $b_j$, and for every $i \in [m]$, we let $C_i \in \mathcal{B}_2^1$ consist of all points $ b_j \in \mathbb{R}$ where $b_j$ is assigned to a region of $V(\cal F)$ contained in $S_i$. 
 
An intersection $\bigcap_{i\in I}C_i$ is nonempty if and only if it contains some point $b_j$. By construction, this is equivalent to a nonempty region of $V(\cal F)$, indexed by $b_j$, that is contained in $S_i$ for every $i\in I$, which in turn is equivalent to $\bigcap_{i\in I}S_i \neq \emptyset$. 
\end{proof}

It follows by a standard lifting argument that the system of halfspaces of $\mathcal{B}_k^d$ has dual VC-dimension bounded by a constant depending on $k$ and $d$. Indeed, by linearization, a single polynomial inequality (of degree at most $k$) becomes a linear inequality in $\mathbb R^{\binom{d+k}{d}}$, and as the dual VC-dimension of standard Euclidean (linear) halfspaces is finite (depending only on the dimension), the claim follows. 

As we have seen, both Radon and fractional Helly numbers of ${\cal B}_2^1$ are unbounded, even though the system of halfspaces has finite dual VC-dimension.  The space $\mathcal{B}_2^1$ therefore demonstrates the necessity of a bounded Radon number in Theorem \ref{main}.

Furthermore, B{\'a}r{\'a}ny and Kalai asked whether one can bound the fractional Helly number of a separable convexity space whose system of halfspaces has bounded VC-dimension \cite[Problem 2.10]{BK-bull}. (They refer to this as an MY-space.) In view of Assouad's lemma and Proposition \ref{BK-counter}, this question has no affirmative answer without a bounded Radon number. Indeed, it follows from Theorem \ref{main} and Assouad's lemma that this additional assumption is sufficient: If the Radon number of a separable convexity space $(X,\mathcal{C})$ is bounded and the system of halfspaces has VC-dimension at most $d$, then the dual VC-dimension is strictly less than $2^{d+1}$, and consequently the fractional Helly number for $\mathcal{C}$ is at most $2^{d+1}$.

There is also a second part to B{\'a}r{\'a}ny and Kalai's question \cite[Problem 2.10]{BK-bull}, asking whether the fractional Helly number of a separable convexity space is \textit{equal} to the VC-dimension of its system of halfspaces. This is not true even under the assumption that the Radon number is bounded. To see this consider Box convexity. It was shown by Gey \cite{gey} that the VC-dimension of axis-parallel halfspaces in $\mathbb{R}^d$ is of order $\log d$, and as noted earlier, the fractional Helly number equals $d+1$.


\section{Proof of the main result}
Let $F_1, \dots, F_k$ be finite families of subsets of some ground set $X$. We say that the families $F_1, \dots, F_k$ satisfy the \emph{colorful Helly hypothesis} if $\bigcap_{i=1}^{k}C_i \neq \emptyset$ for every choice $C_1\in F_1, \dots, C_{k}\in F_{k}$. The classical colorful Helly theorem of B{\'a}r{\'a}ny \cite{col-hell} asserts that if $k = d+1$ finite families of convex sets in $\mathbb{R}^d$ satisfy the colorful Helly hypothesis, then one of the families must be intersecting. 

The colorful Helly theorem was generalized to convexity spaces (not necessarily separable) by Holmsen and Lee \cite{holmsen-lee}. They define the \emph{colorful Helly number} of $(X, \mathcal{C})$ as the smallest $k$ such that whenever $k$ finite families $F_1$, $F_2$, $\dots$, $F_k \subset \mathcal{C}$ satisfy the colorful Helly hypothesis, at least one of the $F_i$ is intersecting. Specifically, Holmsen and Lee show that if $(X, \mathcal{C})$ has Radon number at most $r$, then its \emph{colorful Helly number} is at most $k = k(r)\leq r^{r^{\log r}}$. Note that this is the same
bound as the one on the fractional Helly number in terms of its Radon number \cite{holmsen-lee}, which is not a coincidence: a bounded colorful Helly number implies a bounded fractional Helly number \cite{holmsen}.

The key step in the proof of Theorem \ref{main} is a ``weak'' colorful Helly theorem. The difference from the usual colorful Helly theorem is that instead of concluding that one of the families is intersecting, we only conclude that one of the families contains a sufficiently large intersecting \emph{subfamily}.


\begin{prop}\label{weak-colorful}
For any integers $r\geq 3$, $d\geq 1$ and $m\geq 2$ there exists a positive integer $p = p(r,d,m)$ with the following property:
if $(X, \mathcal{C})$ is a separable convexity space with Radon number $r$, the system of halfspaces has dual VC-dimension $d$, and $F_1, \dots, F_{d+1} \subset \mathcal{C}$ are families with $|F_i| = p$ satisfying the colorful Helly hypothesis, then one of the families $F_i$ contains $m$ members with nonempty intersection.  
\end{prop}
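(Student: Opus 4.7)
The plan is to adapt the argument of B\'ar\'any and Matou\v sek from \cite[Proposition 3.1]{lattice} to the abstract setting, replacing their use of specific geometric features of $\mathbb{Z}^d$ with three general tools available here: the bounded Helly number $h\le r-1$ (Theorem \ref{levis theorem}), separability, and the dual VC-dimension bound $d$ on halfspaces (so in particular the dual shatter function satisfies $\pi^*(k)=O(k^d)$ by Sauer's lemma).

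Suppose for contradiction that no $F_i$ contains $m$ members with a common point; equivalently, every $x\in X$ lies in at most $m-1$ members of each $F_i$. For every rainbow tuple $\mathbf{C}=(C_1,\ldots,C_{d+1})\in F_1\times\cdots\times F_{d+1}$, use the hypothesis to fix a representative $x(\mathbf{C})\in\bigcap_j C_j$, producing a set $X_0\subseteq X$ of size at most $p^{d+1}$. For each index $i$ and each $x\in X_0$, at least $p-(m-1)$ sets $C\in F_i$ satisfy $x\notin C$, and by separability we may associate to each such triple a halfspace $\gamma(i,x,C)\in\mathcal{C}$ with $C\subseteq\gamma(i,x,C)$ and $x\notin\gamma(i,x,C)$. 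This reformulates the problem as one about a dense set system of halfspaces of bounded dual VC-dimension acting on $X_0$.

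The core of the argument is a Ramsey-type extraction on the rainbow family. Using the dual shatter bound, we classify rainbow tuples by the trace of their associated halfspaces on designated sub-collections of $X_0$ and of the $F_i$'s; a standard iterated pigeonhole (of K\H{o}v\'ari--S\'os--Tur\'an flavour) then lets us refine to large sub-families $F_i'\subseteq F_i$ on which the separation pattern is uniform. Inside this refined configuration, the uniform separation structure together with the Radon number bound $r$ must force either (i) a set of $m$ members of some $F_i'$ that all avoid a common halfspace cut, yielding a common point by separability applied in the opposite direction, contradicting the standing assumption, or (ii) a subset of $X_0$ of size greater than $r$ with no admissible Radon partition, contradicting the Radon-number assumption via Levi's theorem.

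The main obstacle will be the final combinatorial step. In \cite{lattice} the extraction is guided by the coordinate structure of lattice halfspaces and by Doignon's Helly number $2^d$, both of which provide a quite rigid scaffolding; here only the abstract bounds are available, so the contradiction must be produced by purely combinatorial means from the dual-shatter bound, the Helly number $h\le r-1$, and the Radon number $r$. Making this step quantitative will determine $p(r,d,m)$ (presumably of Ramsey/tower type), but only its existence is needed to deduce Theorem \ref{main}.
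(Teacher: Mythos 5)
Your overall skeleton (assume for contradiction that no $F_i$ has $m$ members with a common point, pick witnesses $x(\mathbf{C})$ in the rainbow intersections, invoke separability to produce halfspaces, then do a Ramsey-type extraction) matches the paper's strategy in broad outline, but the two branches you propose for the final contradiction do not go through, and the key lemma that actually makes the extraction work is missing.

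First, ``separability applied in the opposite direction'' is not a valid move. Separability only says: if $x \notin C$ with $C$ convex, there is a halfspace $\gamma$ with $C \subseteq \gamma$ and $x \in \bar\gamma$. There is no converse giving a common point to $m$ members of $F_i'$ from the fact that they all miss some halfspace, so branch (i) of your case analysis cannot be justified. Second, branch (ii) (``a set of more than $r$ points in $X_0$ with no admissible Radon partition'') is never how the Radon number is used. The Radon number does enter, but indirectly and positively, via Levi's theorem: it guarantees a bounded \emph{Helly number}, which is used to \emph{find} a common point of a certain auxiliary family, not to forbid Radon partitions. The actual contradiction at the end of the paper's proof is to the \emph{dual VC-dimension} of the halfspaces, not to the Radon number: one produces $d+1$ halfspaces $\gamma_1,\dots,\gamma_{d+1}$ realizing a complete Venn diagram (every sign pattern is nonempty, witnessed by the points $f(e)$ over the edges $e$ of an extracted $K^{d+1}(2)$), which contradicts dual VC-dimension $\le d$. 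In your plan the dual VC-dimension appears only as a counting tool (via the dual shatter function), not as the target of the contradiction, so the argument has no endpoint.

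The missing device is the analogue of \cite[Corollary~2.2]{lattice}, namely Lemma~\ref{large separated pairs}: if $\bigcap_i \conv(f_i(E))=\emptyset$, then by Levi's theorem the family $F$ of convex hulls of sub-multisets of $\bigcup_i f_i(E)$ that carry more than a $\frac{r-2}{r-1}$-fraction of total multiplicity has a common point $x_0$ (every $\le r-1$ of them intersect by a union bound), and separating $x_0$ from the $\conv(f_i(E))$ that misses it produces a single halfspace $\gamma$ whose complement captures at least a $\frac{1}{r-1}$-fraction of the mass, hence at least $\frac{1}{r-1}|E|$ edges on some $f_j$. This one lemma is what turns separability plus the Radon bound into a density statement that the Erd\H{o}s--Simonovits supersaturation theorem can then amplify into complete multipartite subhypergraphs (Lemma~\ref{for any f}), and iterating that over the $d+1$ color classes yields the $K^{d+1}(2)$ and the Venn-diagram contradiction. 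Your K\H{o}v\'ari--S\'os--Tur\'an-style refinement on traces of the local halfspaces $\gamma(i,x,C)$ does not by itself produce a \emph{single} halfspace separating a positive fraction, which is the crux.
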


The proof of Proposition \ref{weak-colorful} closely follows the argument of B{\'a}r{\'a}ny and Matou{\v s}ek \cite[Proposition 3.1]{lattice}, who established Proposition \ref{weak-colorful} for the case of convex lattice sets. This is the main technical step of this paper and is given in Section \ref{section:weak proof}. 

With Proposition \ref{weak-colorful} at hand, Theorem \ref{main} now follows from a standard counting argument using the  Erd{\H o}s--Simonovits supersaturation theorem  \cite{ErdSim}. The theorem is used frequently in discrete geometry and a simple proof can be found in \cite[Theorem 9.2.2]{mato}. For completeness, we spell out the proof of Theorem \ref{main} below.

For integers $k, s\geq 2$, let $K^k(s)$ denote the complete $k$-uniform $k$-partite hypergraph where each vertex part has size $s$.

\begin{theorem*}[Erd{\H o}s--Simonovits] \label{erdos-simonovits}
For all integers $k, s\geq 2$ and a real number $\varepsilon\in (0,1]$ there exists a $\delta>0$ with the following property. Let $H$ be a $k$-uniform hypergraph on $n$ vertices and at least  $\varepsilon \binom{n}{k}$ edges. Then $H$ contains at least $\lfloor\delta n^{ks}\rfloor$ copies of $K^k(s)$.
\end{theorem*}
Let us remark on an important special case: \emph{For any fixed $\varepsilon>0$, if a $k$-uniform hypergraph $H$ contains an $\varepsilon$-fraction of all possible edges and $n$ is sufficiently large in terms of $k$ and $\varepsilon$, then $H$ contains $K^k(s)$ as a subhypergraph (not necessarily induced).}

\begin{proof}[Proof of Theorem \ref{main} (assuming Proposition \ref{weak-colorful})]
Consider a separable convexity space $(X, \mathcal{C})$ satisfying the hypothesis of Theorem \ref{main}. Let $r$ be the Radon number and let $d$ be the dual VC-dimension of the system of halfspaces.  Let $m$ be the bound on the fractional Helly number given by the Holmsen--Lee theorem, and let $p = p(r,d,m)$ be the integer from Proposition \ref{weak-colorful}. We will show that if $F\subset \mathcal{C}$ is a finite family in which a positive fraction of the $(d+1)$-tuples are intersecting, then a positive fraction of the $m$-tuples of $F$ are intersecting.

Now suppose we are given a family $F\subset \mathcal{C}$, with $|F|=n$, which contains at least $\alpha\binom{n}{d+1}$ intersecting $(d+1)$-tuples. Let $H$ be a $(d+1)$-uniform hypergraph on $n$ vertices which are in one-to-one correspondence with the members of $F$, and let a $(d+1)$-tuple of vertices of $H$ form an edge whenever the corresponding $(d+1)$-tuple of $F$ is intersecting. Thus $H$ has at least $\alpha\binom{n}{d+1}$ edges, and so by the Erd{\H o}s--Simonovits theorem, $H$ contains at least $\delta n^{p(d+1)}$ distinct copies of $K^{d+1}(p)$ for some $\delta>0$ (which depends only on $\alpha$, $d$, and $p$). 
Moreover, a fixed $p(d+1)$-tuple of vertices can contain at most a constant number (depending only on $p$ and $d$) of distinct copies of $K^{d+1}(p)$, and so $H$ contains at least $\delta'n^{p(d+1)}$ copies of $K^{d+1}(p)$ each supported on a distinct $p(d+1)$-tuple of vertices, for some $\delta'>0$ (which still only depends on $\alpha$, $d$, and $p$). By Proposition \ref{weak-colorful}, for each such copy we find an $m$-tuple of $F$ with nonempty intersection. Each such intersecting $m$-tuple will be counted in this way by at most $\binom{n-m}{p(d+1)-m}$ distinct copies of $K^{d+1}(p)$, which means (as $n\to \infty$ with $m, p, d$ fixed) there are at least
\[\textstyle{\frac{\delta'n^{p(d+1)}}{\binom{n-m}{p(d+1)-m}} \geq cn^{m}\geq \alpha'\binom{n}{m}}\]
distinct $m$-tuples of $F$ with nonempty intersection, where $\alpha'>0$ depends only on $\alpha$, $d$, and $p$. We may now apply the Holmsen--Lee theorem to find $\beta n$ members of $F$ with nonempty intersection, where $\beta = \beta(\alpha', r) > 0$.
\end{proof}

\subsubsection*{Remark} 
Formally, we can define the \emph{weak colorful Helly number} of a convexity space $(X,\mathcal{C})$ to be the smallest integer $k\geq 2$ such that for every $m\geq 2$ there exists a $p = p(m)$ with the following property: if $F_1$, $\dots$, $F_{k} \subset \mathcal{C}$, with $|F_i|=p$, satisfy the colorful Helly hypothesis, then one of the families $F_i$ contains $m$ members with nonempty intersection. In this terminology, Proposition \ref{weak-colorful} asserts that the weak colorful Helly number of a separable convexity space with bounded Radon number is at most $d+1$, where $d$
is the dual VC-dimension of its system of halfspaces. 

Clearly the weak colorful Helly number is bounded above by the colorful Helly number, and it follows from \cite{holmsen} that the fractional Helly number of a convexity space is bounded above by its colorful Helly number. This is detailed in \cite{holmsen-lee} together with a bound on the colorful Helly number in terms of the Radon number.  

The preceding argument (showing that Theorem \ref{main} follows from Proposition \ref{weak-colorful}) actually tells us that the fractional Helly number is bounded above by the weak colorful Helly number, provided that the fractional Helly number is bounded. Even though the assumption of a bounded fractional Helly number is critical in the preceding argument, we are not aware of any convexity space in which the weak colorful Helly number is bounded but the fractional Helly number is unbounded.

\section{Auxiliary results} \label{section:weak proof}

The proof of Proposition \ref{weak-colorful} is the main technical step of this paper. 
It relies on two auxiliary results about finite point configurations in a separable convexity space. 
These results (Lemmas \ref{large separated pairs} and \ref{for any f} below) were originally proved by B{\'a}r{\'a}ny and Matou{\v s}ek in the setting of convex lattice sets in $\mathbb{Z}^d$. 
Surprisingly, in our generalizations of these lemmas, the dual VC-dimension of the system of halfspaces does not play any role, and we require only that the Radon number is bounded. 
The bound on the dual VC-dimension comes into play only in the very last step of the proof of Proposition \ref{weak-colorful}.

Let $(X, \mathcal{C})$ be a separable convexity space. Throughout this section we frequently deal with finite multisets of points from $X$. It will be convenient to think of a multiset as a function $f: E \to X$, where $E$ is a nonempty finite set. 
For a subset $Z\subset X$, we write $|Z|_f = \sum_{x\in Z}|f^{-1}(x)|$. In other words,  $|Z|_{f}$ counts, with multiplicity, the number of points of $f(E)$ that are contained in $Z$.

The following lemma extends \cite[Corollary 2.2]{lattice} to the setting of separable convexity spaces. 

\begin{lemma} \label{large separated pairs}
Let $(X, \mathcal{C})$ be a separable convexity space with Radon number at most $r$, let $E$ be a nonempty finite set, and let $m \geq 2$ be an integer. 
For every finite family of functions 
$\{f_i\}_{i=1}^m$, where $f_i:E\to X$
and $\bigcap_{i=1}^m \conv(f_i(E)) = \emptyset$, there exist 
\begin{itemize}
    \item[-] a subset $E_0\subset E$, with $|E_0|\geq \frac{1}{r-1}|E|$,
    \item[-] two functions $f_a$ and $f_b$ from the family, and
    \item[-] a halfspace $\gamma\in \mathcal{C}$,
\end{itemize}
such that  $f_a(E_0)\subset \gamma$ and $f_b(E_0)\subset \bar{\gamma}$.  
\end{lemma}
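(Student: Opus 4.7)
The plan is to adapt B\'ar\'any and Matou\v sek's proof of \cite[Corollary 2.2]{lattice} to the abstract setting: Levi's theorem (Theorem \ref{levis theorem}) will play the role of Doignon's Helly theorem, and the $S_3$-separation hypothesis replaces standard hyperplane separation in $\mathbb{R}^d$.

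First, let $K_i = \conv(f_i(E))$. By Levi's theorem, the Helly number of $(X,\mathcal C)$ is at most $r-1$, so the hypothesis $\bigcap_{i=1}^m K_i = \emptyset$ yields a subfamily of size $k \leq r-1$ with empty intersection. Relabel so this subfamily is $K_1,\dots,K_k$, and take it to be inclusion-minimal, providing witness points $p_l \in \bigcap_{l'\neq l} K_{l'}$ with $p_l\notin K_l$ for each $l\in[k]$. Separability then gives halfspaces $\gamma_l\in\mathcal C$ with $K_l\subset\gamma_l$ and $p_l\in\bar\gamma_l$. Since $K_{l'}\subset\gamma_{l'}$ automatically, the halfspace $\gamma=\gamma_{l'}$ satisfies $f_{l'}(E_0)\subset\gamma$ for every subset $E_0\subset E$; the lemma reduces to producing a large $E_0$ on which $f_l(e)\in\bar\gamma_{l'}$ for some fixed pair $(l',l)$.

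Next, I will define a label function partitioning $E$ into classes indexed by $[k]$. For each $e\in E$, the set
\[
T(e)=\{(l',l)\in[k]\times[k]:l\neq l',\ f_l(e)\in\bar\gamma_{l'}\}
\]
records all candidate separating pairs. The aim is to show $T(e)\neq\emptyset$ for every $e\in E$ and then single-index pigeonhole: assign to each $e$ the smallest $l'$ for which $(l',l)\in T(e)$ for some $l$, obtaining $k$ classes of which some has size at least $|E|/k\geq|E|/(r-1)$; within that class the separating halfspace $\gamma_{l'}$ is already fixed, and a secondary argument (exploiting that the dependence on the second index $l$ turns out to be controllable) identifies a fixed ``other'' index $l$ on a further subset of proportional size.

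The main obstacle, and the step I expect to require the most care, is controlling the bad case $T(e)=\emptyset$, equivalently $f_l(e)\in\bigcap_{l'}\gamma_{l'}$ for every $l\in[k]$. Globally this case cannot hold for every $e\in E$: if it did, then $K_l=\conv(f_l(E))\subset\bigcap_{l'}\gamma_{l'}\subset\gamma_{l'}$ for every $l\neq l'$, forcing $p_{l'}\in K_l\subset\gamma_{l'}$ and contradicting $p_{l'}\in\bar\gamma_{l'}$. The same convex-hull argument, together with the crucial observation that halfspace complements are convex (so that $p_{l'}\in\conv(f_l(E))\cap\bar\gamma_{l'}$ forces some individual $f_l(e)\in\bar\gamma_{l'}$), is what is needed to localize the global contradiction: any ``large'' putative bad subset still has to give up enough points escaping the halfspaces. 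Managing this conversion between a set-theoretic and a convex-hull statement, and ensuring that the labelling in the previous step survives these replacements without losing more than a factor of $k$, is the delicate bookkeeping that I expect to yield the constant $\tfrac{1}{r-1}$ matching the Helly number.
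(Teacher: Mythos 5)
Your approach diverges from the paper's and contains a genuine gap that you yourself flag but do not resolve.

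The crucial unproven claim is that $T(e)\neq\emptyset$ for \emph{every} $e\in E$. Your global argument is sound as far as it goes: since $p_{l'}\in\conv(f_l(E))\cap\bar\gamma_{l'}$ and $\gamma_{l'}$ is convex, some individual $f_l(e)$ must lie in $\bar\gamma_{l'}$. But this only produces \emph{one} bad element $e$ per pair $(l',l)$ --- at most $k(k-1)\leq(r-1)(r-2)$ elements in total, which could be a vanishingly small fraction of $E$. There is no reason a particular $e$ cannot satisfy $f_l(e)\in\bigcap_{l'}\gamma_{l'}$ for every $l$: the separation hypothesis only constrains the convex hulls, not individual points. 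So the pigeonhole step on the label function has no floor to stand on, and even if it did, fixing both indices $l'$ and $l$ would cost two pigeonhole factors, giving $|E|/(k(k-1))$ rather than $|E|/(r-1)$.

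The paper avoids this entirely by never passing to a minimal bad subfamily of the $K_i$'s. Instead it sets $S=\bigcup_i f_i(E)$, $N=m|E|$, and applies Levi/Helly to the auxiliary family $F$ of convex hulls $\conv(Z)$ of \emph{all} subsets $Z\subseteq S$ whose $f$-weighted mass exceeds $\tfrac{r-2}{r-1}N$. Every $\leq r-1$ members of $F$ intersect by a mass-counting argument (their complements together cannot exhaust $N$), so Helly yields a single point $x_0$ in every member of $F$. By hypothesis $x_0\notin\conv(f_i(E))$ for some $i$; separability gives a halfspace $\gamma\supseteq f_i(E)$ with $x_0\in\bar\gamma$. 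Since $x_0$ lies in every high-mass convex hull, $\gamma\cap S$ must carry mass at most $\tfrac{r-2}{r-1}N$, hence $\bar\gamma$ carries mass at least $N/(r-1)$, all of it on $f_j$ for $j\neq i$, and a single pigeonhole over $j$ finishes. The weighted threshold in the definition of $F$ is precisely the device that converts the Helly-point/separation step into a \emph{quantitative} statement about how much of $E$ escapes into $\bar\gamma$ --- the piece your argument is missing. I would recommend discarding the minimal-subfamily framework and adopting this mass-threshold construction.
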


\begin{proof}
Let $S = \bigcup_{i=1}^mf_i(E)$ and set 
$N = \sum_{i=1}^m |S|_{f_i} = m|E|$. 
Define the family
\[
F=\left\{\: {\conv(Z) \; : \; Z\subseteq S} ;\:  \textstyle{\sum_{i=1}^m |Z|_{f_i} > \tfrac{r-2}{r-1} N} \right\}.
\]
Note that $F$ is finite since $S$ is finite, and nonempty since $\conv(S) \in F$. 
Moreover, every $k\leq r-1$ members of $F$ have a point in common: 
if $\conv(Z_1)$, $\dots$, $\conv(Z_k) \in F$, then $\sum_{i=1}^m |S\setminus Z_l|_{f_i}  < \frac{1}{r-1} N$ 
for every $1\leq l \leq k$, 
and so $\sum_{l=1}^k \sum_{i=1}^m|S\setminus Z_l|_{f_i} < \frac{k}{r-1} N\leq N$. If $\bigcap_{l=1}^k Z_l$ were empty, then $\bigcup_{l=1}^k (S\setminus Z_l) = S$. Since $|\cdot|_{f_j}$ is a counting measure, the covering $S = \bigcup_{l=1}^k (S\setminus Z_l)$ gives $|S|_{f_j} \leq \sum_{\ell=1}^k |S\setminus Z_\ell|_{f_j}$ for each $j$, and summing over all $j$ yields $N \leq \sum_{l=1}^k \sum_{i=1}^m|S\setminus Z_l|_{f_i} < N$, a contradiction. Hence $\bigcap_{l=1}^k Z_l\neq\emptyset$, and since $Z_l\subseteq\mathrm{conv}(Z_l)$ for each $l$, any point in $\bigcap_{l=1}^k Z_l$ lies in $\bigcap_{l=1}^k\mathrm{conv}(Z_l)$.

Thus $F$ is a nonempty finite family of convex sets in $\mathcal{C}$ in which every $k\leq r-1$ members have nonempty intersection. We have assumed that the Radon number of $(X, \mathcal{C})$ is at most $r$, and Levi's theorem \cite{levi} implies that the Helly number of $(X, \mathcal{C})$ is at most $r-1$. It follows that there is a point $x_0$ in common to all members of $F$.

Since $\bigcap_{i=1}^m \text{conv}(f_i(E)) = \emptyset$, 
there is an $f_a$ such that $x_0 \not\in \conv(f_a(E))$. 
By separability there is a halfspace $\gamma$ in $\cal C$ such that 
\[ f_a(E) \subseteq \conv(f_a(E)) \subseteq \gamma \: \text{ and } \: x_0\in \bar{\gamma}.\] 
Moreover, as $x_0$ belongs to every set in $F$ yet is not contained in the halfspace $\gamma$,
we have 
$\conv(\gamma \cap S) \notin F$, hence $\textstyle{ \sum_{j=1}^m |\gamma|_{f_j} \leq \frac{r-2}{r-1}N }$, which implies $\sum_{j=1}^m|\bar{\gamma}|_{f_j}\geq \frac{N}{r-1}$.

Note also that $|\bar{\gamma}|_{f_a}=0$ since $f_a(E) \subseteq \gamma$.
By the pigeonhole principle there is a $b\in [m]\setminus\{a\}$ such that 
\[\textstyle{|\bar{\gamma}|_{f_b} \geq \frac{N}{(m-1)(r-1)} = \frac{m|E|}{(m-1)(r-1)} \geq \frac{1}{r-1}|E|}.\] 
To finish the proof, we set $E_0= \{e\in E \: : \: f_b(e)\in \bar{\gamma}\}$.
\end{proof}

Let $H = (V, E)$ be a $k$-uniform hypergraph. 
For a vertex $v\in V$ and a subset $\tilde{E}\subset E$, we define the subset $\tilde{E}_v\subset \tilde{E}$ as 
\[\tilde{E}_v = \{e\in \tilde{E} \: : \: v\in e\}.\]

Fix a separable convexity space $(X, \mathcal{C})$, a $k$-uniform hypergraph $H = (V,E)$, and a function $f:E\to X$. For a subhypergraph $\tilde{H} = (\tilde{V}, \tilde{E})$ of $H$ and a subset of vertices $A\subset \tilde{V}$, we say that \emph{$A$ is separable with respect to $\tilde{H}$} if there exist 
\begin{itemize}
    \item[-] a pair of distinct vertices $u, v \in A$, and
    \item[-] a halfspace $\gamma\in \mathcal{C}$
\end{itemize}
with the property that \[f(\tilde{E}_u) \subset \gamma \: \text{ and } \: f(\tilde{E}_v) \subset \bar{\gamma}.\]
(Here $\tilde{E}_u$ and $\tilde{E}_v$ denote the edges of $\tilde{H}$ that contain $u$ and $v$, respectively.)

Note that this separability property is hereditary in the following sense: if $A$ is separable with respect to $\tilde{H}$ and $H' = (V', E')$ is a subhypergraph of $\tilde{H}$ such that $A\subset V'$, then $A$ is also separable with respect to $H'$ (using the same witnesses $u, v$, and $\gamma$ that witnessed the separability of $A$ with respect to $\tilde{H}$).

The following is an extension of \cite[Lemma 3.2]{lattice}.

\begin{lemma} \label{for any f}
Given integers $n\geq m\geq 2$, $k\geq 2$ and $r\geq 3$, 
there exists an integer $t=t(n,m,k,r)$ with the following property.
Let $(X, \mathcal{C})$ be a separable convexity space with Radon number at most $r$, and let $H = K^k(t)$ with vertex parts $V_1, \dots, V_k$ and edge set $E$. For every function $f:E\to X$, one of the following holds:
\begin{enumerate}
\item \label{intersecting case} There is an $m$-element set $A\subset V_{k}$  such that 
    \[\textstyle{\bigcap_{v\in A}} \conv(f(E_v)) \neq \emptyset.\]

\item \label{separation case} There are $n$-element sets $W_1\subset V_1, \dots, W_k\subset V_k$ such that, for the induced subhypergraph $\tilde{H} = H[\bigcup_{i=1}^k W_i]$, every $m$-element subset of $W_k$ is separable with respect to $\tilde{H}$.
\end{enumerate}
\end{lemma}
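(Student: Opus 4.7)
Plan: Assume case~(1) fails; we construct the $W_i$ witnessing case~(2) by a greedy iteration alternating Lemma~\ref{large separated pairs} with a multipartite Kővári--Sós--Turán (KST) result (equivalently, Erdős' higher-dimensional Zarankiewicz bound). Identify each edge $e\in E$ with its tuple $(v_1,\dots,v_k)\in V_1\times\cdots\times V_k$, and for $v\in V_k$ define $g_v(v_1,\dots,v_{k-1})=f(v_1,\dots,v_{k-1},v)$, so that $f(E_v)=g_v(V_1\times\cdots\times V_{k-1})$. Two monotonicity facts drive the argument: the failure of case~(1) is preserved under passing to any sub-product of $V_1\times\cdots\times V_{k-1}$, since restricting the domain of each $g_v$ only shrinks its convex hull; and if distinct $u,v\in A$ and a halfspace $\gamma$ satisfy $g_u(W')\subset\gamma$ and $g_v(W')\subset\bar\gamma$ for a sub-product $W'$, the same pair and halfspace continue to witness separation on every smaller sub-product $W''\subset W'$.

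The iteration. Fix any $n$-element set $W_k\subset V_k$ and enumerate its $m$-subsets as $A_1,\dots,A_M$ with $M=\binom{n}{m}$; start with $W_j^{(0)}=V_j$ for $j<k$. At iteration $i$, apply Lemma~\ref{large separated pairs} to $\{g_v|_{W_1^{(i-1)}\times\cdots\times W_{k-1}^{(i-1)}}\}_{v\in A_i}$ (its hypothesis is ensured by the first monotonicity), yielding a subset $E_0^{(i)}\subset W_1^{(i-1)}\times\cdots\times W_{k-1}^{(i-1)}$ of density at least $1/(r-1)$, distinct $u_i,w_i\in A_i$, and a halfspace $\gamma_i\in\mathcal{C}$ with $g_{u_i}(E_0^{(i)})\subset\gamma_i$ and $g_{w_i}(E_0^{(i)})\subset\bar\gamma_i$. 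Next, apply the multipartite KST theorem to the dense $(k-1)$-partite set $E_0^{(i)}$: provided each $W_j^{(i-1)}$ is sufficiently large in terms of the desired output size, $k$, and $r$, we extract sub-products $W_j^{(i)}\subset W_j^{(i-1)}$ of a prescribed size $s_i$ with $W_1^{(i)}\times\cdots\times W_{k-1}^{(i)}\subset E_0^{(i)}$. By the second monotonicity, the witnesses $(u_{i'},w_{i'},\gamma_{i'})$ from every earlier iteration $i'<i$ remain valid on the new, smaller sub-product.

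Sizing and main obstacle. Choose $s_0=t$ large enough so that after $M$ iterations we still have $s_M\geq n$, and set $W_j$ to be any $n$-element subset of $W_j^{(M)}$ for $j<k$. Case~(2) then follows: for each $m$-subset $A_i\subset W_k$, the witness $(u_i,w_i,\gamma_i)$ satisfies $g_{u_i}(W_1\times\cdots\times W_{k-1})\subset\gamma_i$ and $g_{w_i}(W_1\times\cdots\times W_{k-1})\subset\bar\gamma_i$, so $A_i$ is separable with respect to $H[W_1\cup\cdots\cup W_k]$. The main obstacle is the interplay between Lemma~\ref{large separated pairs} and the sub-product extraction: the dense set $E_0^{(i)}$ carries no product structure a priori, so recovering a product costs a Zarankiewicz-type factor at each step, and iterating $\binom{n}{m}$ times compounds these losses into an explicit but rapidly growing function $t=t(n,m,k,r)$.
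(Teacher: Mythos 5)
Your proposal is correct and follows essentially the same strategy as the paper's proof: fix $W_k$, iterate over all $\binom{n}{m}$ $m$-subsets, at each step apply Lemma~\ref{large separated pairs} to the restricted functions to produce a dense separated edge set, then apply a density extraction theorem to recover a complete $(k-1)$-partite sub-product and shrink the classes, relying on the same two monotonicity observations (the paper leaves them implicit in ``passing to smaller $U_i$'s at each step''). The only cosmetic difference is that you invoke the multipartite K\H{o}v\'ari--S\'os--Tur\'an / Erd\H{o}s box theorem, while the paper invokes the Erd\H{o}s--Simonovits supersaturation theorem (Proposition~\ref{erdos-simonovits}); both are used only to extract a single copy of $K^{k-1}(s)$ from a dense $(k-1)$-partite hypergraph, so they are functionally interchangeable here.
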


(Note the asymmetry in Lemma \ref{for any f} where the $k$th vertex part plays a special role.) 


\begin{proof}
Fix an arbitrary $n$-element set $W_k\subset V_k$, 
and fix an $m$-element subset $A\subset W_k$. 
Suppose that $A$ does not fall into case \emph{(\ref{intersecting case})}, that is,
\[\textstyle{\bigcap_{v\in A}}\conv(f(E_v)) = \emptyset.\]

We claim that for a given integer $s$, if $t$ is sufficiently large, then there exist
$s$-element subsets $U_1\subset V_1, \dots, U_{k-1}\subset V_{k-1}$ such that 
for the induced subhypergraph $H' = H[U_1 \cup \cdots \cup U_{k-1}\cup W_k]$, 
$A$ is separable with respect to $H'$.

Note that this claim will prove the lemma, 
because after applying the claim at most $\binom{n}{m}$ times and passing to smaller $U_i$'s at each step, 
we can make every $m$-element subset of $W_k$ separable with respect to the resulting induced subhypergraph 
$H[\bigcup_{i=1}^k W_i]$. By starting with $t$ sufficiently large, 
we can guarantee that $|W_1| = \cdots = |W_{k-1}| = n$.

To prove the claim, we apply Lemma \ref{large separated pairs}.
Let $\tilde{E}$ denote the edge set of the $(k-1)$-uniform hypergraph $K^{k-1}(t)$ 
on the vertex classes $V_1, \dots, V_{k-1}$. 
For every $w\in A$, define a function $f_w:\tilde{E} \to X$ by setting 
$f_w(\sigma) = f(\sigma\cup \{w\})$. Note that $f_w(\tilde{E}) = f(E_w)$. 
Since $\bigcap_{w\in A}\conv(f_w(\tilde{E})) = \bigcap_{w\in A}\conv(f(E_w)) = \emptyset$, the hypothesis of Lemma \ref{large separated pairs} is satisfied; by that lemma, there exist 
\begin{itemize}
    \item[-] a subset $\tilde{E}_0\subset \tilde{E}$, with $|\tilde{E}_0|\geq \frac{1}{r-1}|\tilde{E}|$,
    \item[-] a pair of distinct vertices $u,v\in A$, and
    \item[-] a halfspace $\gamma\in \mathcal{C}$,
\end{itemize}
such that $f_u(\tilde{E}_0)\subset \gamma$  and $f_v(\tilde{E}_0)\subset \bar{\gamma}$.

To complete the proof of the claim, we apply the Erd{\H o}s--Simonovits theorem. Since $\tilde{E}_0$ contains at least a $\frac{1}{r-1}$-fraction of the edges of $K^{k-1}(t)$ 
and hence at least an $\varepsilon$-fraction of all possible edges of a $(k-1)$-uniform hypergraph on $(k-1)t$ vertices, where $\varepsilon$ depends only on $k$ and $r$, it follows that if we choose $t$ sufficiently large, then $\tilde{E}_0$ contains the edge set of a copy of $K^{k-1}(s)$. Setting 
$U_1, \dots, U_{k-1}$ to be the vertex parts of this $K^{k-1}(s)$ 
gives $f(E'_u) \subset f_u(\tilde{E}_0) \subset \gamma$ and $f(E'_v) \subset f_v(\tilde{E}_0) \subset \bar{\gamma}$, witnessing the separability of $A$ with respect to $H'$. This
completes the proof of the claim.
\end{proof}

\section{Proof of Proposition \ref{weak-colorful}}

Recall that $F_1, \dots, F_{d+1}$ are families of convex sets in a separable convexity space $(X, \mathcal{C})$ with Radon number at most $r$ and where the system of halfspaces has dual VC-dimension at most $d$. 
For every $i\in [d+1]$ we have $|F_i|=p$, and for every choice $C_1\in F_1, \dots, C_{d+1}\in F_{d+1}$ we have $C_1\cap \cdots\cap C_{d+1}\neq \emptyset$. 
Assume for contradiction that for every $i\in [d+1]$, each $m$-tuple of $F_i$ has empty intersection. 

Let $H = K^{d+1}(p)$ with vertex parts $V_1, \dots, V_{d+1}$ and edge set $E$. For every vertex $v \in V_i$ we assign a convex set $C(v)\in F_i$, so that for every $i\in [d+1]$ we have a bijection between the vertices in $V_i$ and the members of $F_i$.
Define a function $f:E \to X$ by choosing an arbitrary point
\[\textstyle{ f(e) \in  \bigcap_{v\in e} C(v) },\]
which is possible by hypothesis. 

For sufficiently large $p$, we apply Lemma \ref{for any f} to $H = K^{d+1}(p)$ with the $(d+1)$st vertex part playing the special role. Under the contradiction assumption, no $m$-tuple of $F_{d+1}$ has nonempty intersection, which rules out case~(\ref{intersecting case}) of Lemma~\ref{for any f}. We therefore obtain case~(\ref{separation case}), and we obtain an induced subhypergraph such that every $m$-tuple of the $(d+1)$st part is separable with respect to this subhypergraph. 
This application can be repeated another $d$ times, relabeling the vertex parts at each step so that the $(d+1)$st part is the one being separated, with the case (\ref{intersecting case}) excluded at the $j$th step because no $m$-tuple of $F_j$ has nonempty intersection under the contradiction assumption. This eventually yields
an induced  subhypergraph $H' = K^{d+1}(2)$ with vertex parts $W_1, \dots, W_{d+1}$ and edge set $\tilde{E}$ such that 
the unique pair of vertices in each part $W_i$ is separable with respect to $H'$. In particular, writing $W_i = \{u_i, v_i\}$, there exists a halfspace $\gamma_i\in \mathcal{C}$ such that $f(\tilde{E}_{u_i})\subset \gamma_i$ and $f(\tilde{E}_{v_i})\subset \bar{\gamma}_i$. 

Now, for $e = \{a_1, \dots, a_{d+1}\} \in \tilde{E}$ we have
\[f(e) \in A_1\cap \cdots \cap A_{d+1},\]
where \[A_i = 
\begin{cases}
\gamma_i & \text{when } a_i = u_i\\
\bar{\gamma}_i & \text{when } a_i = v_i.
\end{cases}\] 
Since $H' = K^{d+1}(2)$, every choice of $a_i \in \{u_i, v_i\}$ yields an edge in $\tilde{E}$, so all $2^{d+1}$ regions of the Venn diagram of $\gamma_1, \dots, \gamma_{d+1}$ are witnessed nonempty by a point $f(e)$, meaning that $\gamma_1, \dots, \gamma_{d+1}$ have a complete Venn diagram. This contradicts the assumption that the dual VC-dimension of the system of halfspaces is at most $d$.  \qed

\section{Acknowledgements}

We are indebted to two anonymous referees who gave substantial feedback and comments that helped improve this manuscript.

The work was partially supported by ERC Advanced Grants ``GeoScape'' 882971 and ``ERMiD'' 101054936. A. H. was supported by the Institute for Basic Science (IBS-R029-C1), Z. P. was supported by the GA\v CR grant no. 25-16847S. 
Part of the work was done during the {\em Erd{\H o}s Center Focused week: Combinatorial geometry in Radon convexity spaces}, which took place in Budapest in December 2023.



\end{document}